\newfont{\Bbb}{msbm10 scaled\magstephalf}
\newtheorem{Lemma}{Lemma}
\newtheorem{Theorem}{Theorem}
\begin{document}
\title[ Unitary equivalence and reducing subspaces of analytic Toeplitz operator]{On unitary equivalence and reducing subspaces of analytic Toeplitz operator on vector-valued Hardy space}

\author[C. Chen, Yucheng Li and Y. Wang] {Cui Chen, Yucheng Li$^*$ and Ya Wang}
\address{\newline Cui Chen\newline Department of Mathematics
\newline Tianjin University of Finance and Economics\newline Tianjin 300222\newline P.R. China}
\email{chencui\_cc@126.com}

\address{\newline  Yucheng Li\newline Department of Mathematics\newline Hebei Normal University\newline Shijiazhuang 050024\newline P.R. China.}
\email{liyucheng@hebtu.edu.cn}

\address{\newline Ya Wang\newline Department of Mathematics
\newline Tianjin University of Finance and Economics\newline Tianjin 300222\newline P.R. China}
\email{wangyasjxsy0802@163.com}

\keywords{reducing subspace, analytic Toeplitz operator, vector-valued Hardy space}

\subjclass[2010]{Primary: 47A15; Secondary: 30H10, 46C99, 47B35}


\date{}
\thanks{\noindent $^*$Corresponding author.\\
This work was supported by the National Natural Science Foundation of
China (Grant No. 12201452, 12171138).}

\begin{abstract}
In this paper, we proved that $T_{z^n}$ acting on the $\mathbb{C}^m$-valued Hardy space $H_{\mathbb{C}^m}^2(\mathbb{D})$, is unitarily equivalent to $\bigoplus_1^{mn}T_z$, where $T_z$ is acting on the scalar-valued Hardy space $H_{\mathbb{C}}^2(\mathbb{D})$. And using the matrix manipulations combined with operator theory methods, we completely describe the reducing subspaces of $T_{z^n}$ on $H_{\mathbb{C}^m}^2(\mathbb{D})$.
\end{abstract}

\maketitle

\section{Introduction}

Let $\mathbb{D}$ be the open unit disk in the complex plane $\mathbb{C}$. The classical scalar-valued Hardy space $H_\mathbb{C}^2(\mathbb{D})$ consists of all analytic functions $f$ that satisfy
$$\|f\|=\Big[\sup_{0\leq r<1}\frac{1}{2\pi}
\int_0^{2\pi}|f(re^{i\theta})|^2d\theta\Big]^{\frac{1}{2}}<\infty.$$
Correspondingly, the $\mathbb{C}^m$-valued Hardy space \cite{PJR}, denoted by $H_{\mathbb{C}^m}^2(\mathbb{D})$, can be defined as the collection of all $\mathbb{C}^m$-valued analytic functions $F$ on $\mathbb{D}$ such that
$$\|F\|=\Big[\sup_{0\leq r<1}\frac{1}{2\pi}
\int_0^{2\pi}|F(re^{i\theta})|^2d\theta\Big]^{\frac{1}{2}}<\infty.$$
It can also be defined by
$$H_{\mathbb{C}^m}^2(\mathbb{D}):=\{F(z)=\sum_{k=0}^\infty A_k z^k:\|F\|^2=\sum_{k=0}^\infty \|A_k\|_{\mathbb{C}^m}^2<\infty, A_k\in\mathbb{C}^m\}.$$
$H_{\mathbb{C}^m}^2(\mathbb{D})$ is an analytic Hilbert space with respect to the inner product $\langle F,G\rangle=\sum_{i=1}^m\langle f_i,g_i\rangle_{H_\mathbb{C}^2(\mathbb{D})}$
for functions $F=(f_1,f_2,\cdots,f_m)^{\rm T},G=(g_1,g_2,\cdots,g_m)^{\rm T}\in H_{\mathbb{C}^m}^2(\mathbb{D})$. We can also view $H_{\mathbb{C}^m}^2(\mathbb{D})$ as the direct sum of $m$-copies of $H_\mathbb{C}^2(\mathbb{D})$ or sometimes it is useful to see the above space as a tensor product of two Hilbert spaces $H_\mathbb{C}^2(\mathbb{D})$ and $\mathbb{C}^m$, that is,
$$H_{\mathbb{C}^m}^2(\mathbb{D})\equiv \mathop{\underbrace{H_\mathbb{C}^2(\mathbb{D})\oplus\cdots\oplus H_\mathbb{C}^2(\mathbb{D})}}\limits_{m}\equiv H_\mathbb{C}^2(\mathbb{D})\otimes\mathbb{C}^m.$$
Let $T_{z}$ denote the forward shift operator (multiplication by the independent variable) acting on $H_{\mathbb{C}^m}^2(\mathbb{D})$, that is, $T_{z}F(z)=zF(z), z\in\mathbb{D}$. The Banach space of all $\mathcal{L}(\mathbb{C}^m)$ (set of all bounded linear operators on $\mathbb{C}^m$)-valued bounded analytic functions on $\mathbb{D}$ is denoted by $H_{\mathcal{L}(\mathbb{C}^m)}^\infty(\mathbb{D})$. Each $\Theta\in H_{\mathcal{L}(\mathbb{C}^m)}^\infty(\mathbb{D})$ induces an analytic Toeplitz operators $T_\Theta\in H_{\mathcal{L}(\mathbb{C}^m)}^\infty(\mathbb{D})$ defined by
$$T_\Theta F(z)=\Theta(z)F(z),\;\;\;\;F\in H_{\mathbb{C}^m}^2(\mathbb{D}).$$
The elements of $H_{\mathcal{L}(\mathbb{C}^m)}^\infty(\mathbb{D})$ are called the multipliers and are determined by
$$\Theta\in H_{\mathcal{L}(\mathbb{C}^m)}^\infty(\mathbb{D})\;\mbox{if and only if}\;T_{z}T_\Theta=T_\Theta T_{z}.$$
See \cite{CDP} for more details about $H_{\mathbb{C}^m}^2(\mathbb{D})$. It is clear that $T_{z^n}$, that defined by $T_{z^n}F(z)=z^nF(z)$, belongs to $H_{\mathcal{L}(\mathbb{C}^m)}^\infty(\mathbb{D})$. For easier understanding, we rewrite the definition of analytic Toeplitz $T_{z}$ and $T_{z^n}$ as following:
\begin{eqnarray*}
T_{z}F(z)=\left[
\begin{array}{ccc}z&&\\&\ddots&\\&&z\\\end{array}\right]_{\tiny{m\times m}}
\left[\begin{array}{c}f_1\\ \vdots\\f_m\\\end{array}\right],
T_{z^n}F(z)=\left[
\begin{array}{ccc}z^n&&\\&\ddots&\\&&z^n\\\end{array}\right]_{m\times m}
\left[\begin{array}{c}f_1\\\vdots\\f_m\\\end{array}\right]
\end{eqnarray*}
for function $F=(f_1,f_2,\cdots,f_m)^{\rm T}\in H_{\mathbb{C}^m}^2(\mathbb{D})$.

Given two bounded linear operators $S, T$ on a Hilbert space $H$, we say that $S$ is {\em similar} to $T$ if $A^{-1}SA=T$ for some bounded linear invertible operator $A$ on $H$, in which case we write $S\sim T$. If $A$ is  unitary, i.e., $A^*A=AA^*=I$,  in this case, we call $S$ is unitarily equivalent to $T$.
Also, a closed subspace $M$ of a Hilbert space $H$ is called an {\em invariant} subspace for a bounded linear operator $T$ if $TM\subset M$. If $M$ and $M^{\bot}$ are both invariant subspaces for $T$, then $M$ is said to be a {\em reducing} subspace for $T$. In addition, if $M$ does not contain any nontrivial reducing subspace, then $M$ is called a {\em minimal} reducing subspace for $T$.

It is well known that invariant subspace of an operator can be identified in terms of invariant subspaces of a unitary equivalent operator or a similar operator. Over the years, there are some results characterizing the lattice of closed invariant subspaces by constructing unitary equivalent operators or similar operators, we refer the interested readers to the recent papers such as \cite{CR,GZ,Rs}.

In previous years, the unitary equivalence and similarity between analytic Toeplitz operators $T_{z^n}$ and $\oplus_1^n T_z$ acting on a scalar-valued Hilbert space is an active topic which has be concerned in lots of papers. In those papers, sometimes the analytic Toeplitz operators $T_{z^n}$ and $T_z$ were known as multiplication operators, therefore, they were denoted by $M_{z^n}$ and $M_z$. In the earlier paper, Ball studied the reducing subspace problem in Hardy space (see \cite{Ball1, Ball2}). Inspired by it, Jiang and Li (see \cite{JL}) in 2007 first obtained that analytic Toeplitz operator $M_{B(z)}$ is similar to $\oplus_1^n M_z$ on the Bergman space if and only if $B(z)$ is an $n$-Blaschke product. Next, Li (see \cite{L1}) in 2009 proved that multiplication operator $M_{z^n}$ is similar to $\oplus_1^n M_z$ on the weighted Bergman space. And then, Jiang and Zheng \cite{JZ} extended the main result in \cite{JL} to the weighted Bergman space. In 2011, Douglas and Kim in \cite{DK} investigated the reducing subspaces for an analytic multiplication operator $M_{z^n}$ on the Bergman space $A_\alpha^2(A_r)$ of the annulus $A_r$. Guo and Huang studied the similarity, unitary equivalence and reducing subspaces of multiplication operators on the Bergman space (see \cite{GH})).  Moreover, in 2017, the unitary equivalence of analytic multipliers on Sobolev disk algebra was discussed in \cite{CQW}. But such a characterization is not always hold for all Hilbert spaces. In recent paper, Li, Lan and Liu (see \cite{LLL}) proved that multiplication operator $M_{z^n}$ is not similar to $\oplus_1^n M_z$ on the Fock space, they are quasi-similar, actually. For further results related, see \cite{AH,JS,LM,ZRF}, which including some advances extending to higher dimensions.

It should be noted that all these results were considered in the scalar-valued Hilbert spaces, such a characterization leave a blank in the vector-valued Hilbert space. Based on the works above, in this paper we are interested in the corresponding result in the vector-valued Hilbert space. Specifically, we proved that $T_{z^n}$ acting on the $\mathbb{C}^m$-valued Hardy space $H_{\mathbb{C}^m}^2(\mathbb{D})$ is unitarily equivalent to $\bigoplus_1^{mn}T_z$, where $T_z$ is acting on the scalar-valued Hardy space $H_{\mathbb{C}}^2(\mathbb{D})$. And using the matrix manipulations combined with operator theory methods, we completely describe the reducing subspaces of $T_{z^n}$ on $H_{\mathbb{C}^m}^2(\mathbb{D})$.

\section{The unitary equivalence of analytic toeplitz operator}

For $i=1,2,\cdots,m$, let $\{e_i\}_{i=1}^m$ be the $m$-dimensional unit vector set with $e_i=(0,\cdots,0,\mathop{1}\limits_{i},0,\cdots,0)^{\rm T}$. Then we can get the orthonormal basis of $H_{\mathbb{C}^m}^2(\mathbb{D})$.
\begin{Lemma}
$\{e_iz^p=(0,\cdots,0,\mathop{z^p}\limits_{i},0,\cdots,0)^{\rm T}\}_{\substack{i=1,2,\cdots,m\\
p=0,1,2,\cdots}}$ forms an orthonormal basis of $H_{\mathbb{C}^m}^2(\mathbb{D})$.
\end{Lemma}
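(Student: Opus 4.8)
The plan is to verify directly the two defining properties of an orthonormal basis: orthonormality of the proposed family, and totality (density of its linear span). Both reduce to the corresponding well-known facts for the monomials $\{z^p\}_{p\geq 0}$ in the scalar Hardy space $H_{\mathbb{C}}^2(\mathbb{D})$, which is natural in view of the identification $H_{\mathbb{C}^m}^2(\mathbb{D})\equiv H_{\mathbb{C}}^2(\mathbb{D})\otimes\mathbb{C}^m$ recorded above.

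First I would check orthonormality. Writing out $e_iz^p$ coordinate-wise, its $k$-th entry is $z^p$ when $k=i$ and $0$ otherwise. Hence, by the definition $\langle F,G\rangle=\sum_{k=1}^m\langle f_k,g_k\rangle_{H_{\mathbb{C}}^2(\mathbb{D})}$, the inner product $\langle e_iz^p,\,e_jz^q\rangle$ has at most one nonzero summand, occurring only when $i=j$, and in that case it equals $\langle z^p,z^q\rangle_{H_{\mathbb{C}}^2(\mathbb{D})}$. Since $\langle z^p,z^q\rangle_{H_{\mathbb{C}}^2(\mathbb{D})}=\frac{1}{2\pi}\int_0^{2\pi}e^{i(p-q)\theta}\,d\theta=\delta_{pq}$, we obtain $\langle e_iz^p,\,e_jz^q\rangle=\delta_{ij}\,\delta_{pq}$, which is exactly the orthonormality of the family.

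Next I would establish totality. Given $F=(f_1,\dots,f_m)^{\mathrm T}\in H_{\mathbb{C}^m}^2(\mathbb{D})$, each component $f_i\in H_{\mathbb{C}}^2(\mathbb{D})$ has a Taylor expansion $f_i=\sum_{p=0}^\infty a_{i,p}z^p$ converging in $H_{\mathbb{C}}^2(\mathbb{D})$, with $\sum_{p}|a_{i,p}|^2=\|f_i\|^2$. Assembling coordinates gives $F=\sum_{i=1}^m\sum_{p=0}^\infty a_{i,p}\,e_iz^p$, with convergence in $H_{\mathbb{C}^m}^2(\mathbb{D})$, because the difference between $F$ and the partial sum up to degree $N$ has norm $\big(\sum_{i=1}^m\sum_{p>N}|a_{i,p}|^2\big)^{1/2}\to 0$; equivalently one may read this off the series description $F(z)=\sum_k A_kz^k$ by decomposing each $A_k\in\mathbb{C}^m$ in the standard basis $\{e_i\}_{i=1}^m$, using $\|A_k\|_{\mathbb{C}^m}^2=\sum_i|A_k^{(i)}|^2$ and $\|F\|^2=\sum_k\|A_k\|_{\mathbb{C}^m}^2$. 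Thus $F$ lies in the closed linear span of $\{e_iz^p\}$, so the family is complete, and combined with orthonormality this proves the Lemma.

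There is no genuine obstacle here: the statement is a bookkeeping consequence of the fact that if $\{u_\alpha\}$ and $\{v_\beta\}$ are orthonormal bases of Hilbert spaces $\mathcal{H}_1$ and $\mathcal{H}_2$, then $\{u_\alpha\otimes v_\beta\}$ is an orthonormal basis of $\mathcal{H}_1\otimes\mathcal{H}_2$, applied with $u_\alpha=z^p$ in $H_{\mathbb{C}}^2(\mathbb{D})$ and $v_\beta=e_i$ in $\mathbb{C}^m$. The only points deserving a line of care are confirming orthonormality of $\{z^p\}_{p\geq 0}$ in the scalar space and the routine convergence of the double series above.
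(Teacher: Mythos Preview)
Your proof is correct and follows essentially the same route as the paper: you verify orthonormality of the family $\{e_iz^p\}$ via the coordinatewise inner product and then show completeness by expanding an arbitrary $F\in H_{\mathbb{C}^m}^2(\mathbb{D})$ in this family. The paper's argument is slightly terser (it does not spell out the convergence of the double series or the tensor-product viewpoint), but the substance is identical.
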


\begin{proof}
Choose any $F\in H_{\mathbb{C}^m}^2(\mathbb{D})$ with $F(z)=\sum\limits_{p=0}^\infty A_pz^p
=\sum\limits_{p=0}^\infty (a_{1p},a_{2p},\cdots,a_{mp})^{\rm T}z^p$, we can write it as
\begin{eqnarray*}
F(z)&=&\sum_{p=0}^\infty(a_{1p}e_1+a_{2p}e_2+\cdots+a_{mp}e_m)z^p\\
&=&\sum_{i=1}^m\sum_{p=0}^\infty a_{ip}e_iz^p.
\end{eqnarray*}
And it is obvious that $\{e_iz^p=(0,\cdots,0,\mathop{z^p}\limits_{i},0,\cdots,0)^{\rm T}\}_{\substack{i=1,2,\cdots,m\\
p=0,1,2,\cdots}}$ is linear independent. Thus, $\{e_iz^p\}_{\substack{i=1,2,\cdots,m\\p=0,1,2,\cdots}}$ forms a basis of $H_{\mathbb{C}^m}^2(\mathbb{D})$.

Moreover, note that $\langle e_iz^p,e_jz^q\rangle=1$ if and only if $i=j$ and $p=q$. Otherwise, $\langle e_iz^p,e_jz^q\rangle=0$. So $\{e_iz^p\}_{\substack{i=1,2,\cdots,m\\
p=0,1,2,\cdots}}$ forms an orthonormal basis of $H_{\mathbb{C}^m}^2(\mathbb{D})$.
\end{proof}

Before going to the main theorem, we need one more lemmas.

\begin{Lemma}
For each $i=1,2,\cdots,m$ and $j=0,1,\cdots,n-1$, set $\mathcal{L}_{ij}={\rm Span}\{e_iz^{nk+j}:k\in\mathbb{N}\}$. Then

(i) $\{e_iz^{nk+j}:k\in\mathbb{N}\}$ forms an orthonormal basis of $\mathcal{L}_{ij}$.

(ii) $H_{\mathbb{C}^m}^2(\mathbb{D})=\bigoplus_{ij}\mathcal{L}_{ij}$, where the direct sum is over all indexes $i=1,2,\cdots,m$ and $j=0,1,\cdots,n-1$.

(iii) $\mathcal{L}_{ij}$ is a reducing subspace for $T_{z^n}$.
\end{Lemma}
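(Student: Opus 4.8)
The plan is to verify the three assertions in sequence, each following quickly from the orthonormal basis of $H_{\mathbb{C}^m}^2(\mathbb{D})$ established in Lemma 1 together with the explicit action of $T_{z^n}$.

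For (i), since $\{e_iz^p\}_{i,p}$ is an orthonormal basis of the whole space by Lemma 1, any subfamily indexed by a subset of the pairs $(i,p)$ is an orthonormal system; in particular, for fixed $i$ and $j$ the family $\{e_iz^{nk+j}:k\in\mathbb{N}\}$ is orthonormal, and it spans $\mathcal{L}_{ij}$ by definition, hence it is an orthonormal basis of $\mathcal{L}_{ij}$.

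For (ii), I would argue that every basis vector $e_iz^p$ lies in exactly one $\mathcal{L}_{ij}$: given $p$, write $p=nk+j$ with $0\le j\le n-1$ by the division algorithm, so $e_iz^p\in\mathcal{L}_{ij}$. Thus the union of the orthonormal bases of the $\mathcal{L}_{ij}$ over all $i\in\{1,\dots,m\}$, $j\in\{0,\dots,n-1\}$ is precisely the orthonormal basis $\{e_iz^p\}_{i,p}$ of $H_{\mathbb{C}^m}^2(\mathbb{D})$. Since the subspaces $\mathcal{L}_{ij}$ are generated by disjoint pieces of a single orthonormal basis, they are mutually orthogonal and their closed linear span is the whole space, giving the orthogonal decomposition $H_{\mathbb{C}^m}^2(\mathbb{D})=\bigoplus_{ij}\mathcal{L}_{ij}$.

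For (iii), I would check that both $\mathcal{L}_{ij}$ and $\mathcal{L}_{ij}^{\perp}$ are invariant under $T_{z^n}$. Invariance of $\mathcal{L}_{ij}$ is immediate: $T_{z^n}(e_iz^{nk+j})=e_iz^{n(k+1)+j}\in\mathcal{L}_{ij}$, and one extends to all of $\mathcal{L}_{ij}$ by linearity and continuity (boundedness of $T_{z^n}$). For the orthogonal complement, note that $\mathcal{L}_{ij}^{\perp}=\bigoplus_{(i',j')\neq(i,j)}\mathcal{L}_{i'j'}$ by (ii), and each $\mathcal{L}_{i'j'}$ is $T_{z^n}$-invariant by the same computation, so their sum is too; alternatively, one computes $T_{z^n}^{*}(e_iz^{nk+j})=e_iz^{nk+j-n}$ when $k\ge 1$ and $0$ when $k=0$, which again lies in $\mathcal{L}_{ij}$, so $\mathcal{L}_{ij}$ is invariant under $T_{z^n}^{*}$ as well, equivalently $\mathcal{L}_{ij}^{\perp}$ is $T_{z^n}$-invariant. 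Either way $\mathcal{L}_{ij}$ reduces $T_{z^n}$.

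There is no real obstacle here; the only point requiring a little care is making sure the decomposition in (ii) is an \emph{orthogonal} direct sum (not merely algebraic), which is why I would phrase the argument in terms of partitioning a fixed orthonormal basis rather than just checking pairwise intersections are trivial. The computation of the adjoint action of $T_{z^n}$ on basis vectors, used in (iii), is routine and follows from $\langle T_{z^n}^{*}(e_iz^{p}), e_{i'}z^{q}\rangle=\langle e_iz^{p}, e_{i'}z^{q+n}\rangle$.
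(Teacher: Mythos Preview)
Your proposal is correct and follows essentially the same approach as the paper: orthonormality from Lemma~1, the direct-sum decomposition by partitioning the global orthonormal basis according to the residue of the exponent modulo $n$, and invariance of each $\mathcal{L}_{ij}$ (and its complement) under $T_{z^n}$. Your treatment of (ii) via the division algorithm and of (iii) via the explicit adjoint action is in fact more detailed than the paper's, which for (iii) simply declares the invariance obvious, but the underlying argument is the same.
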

\begin{proof}
(i) By a simple calculation, we have
\begin{eqnarray*}
\langle e_iz^{nk_1+j},e_iz^{nk_2+j}\rangle=1
\end{eqnarray*}
if and only if $k_1=k_2$. Otherwise, $\langle e_iz^{nk_1+j},e_iz^{nk_2+j}\rangle=0$. Thus (i) holds.

(ii) It is clear that $\mathcal{L}_{ij}\perp\mathcal{L}_{pq}$ for all $(i,j)\neq(p,q)$. Next, for $F(z)\in H_{\mathbb{C}^m}^2(\mathbb{D})$ with $F(z)=\sum\limits_{p=0}^\infty A_pz^p
=\sum\limits_{p=0}^\infty (a_{1p},a_{2p},\cdots,a_{mp})^{\rm T}z^p$, it is easy to see that $F$ has the form
\begin{eqnarray*}
F(z)=\sum_{i=1}^m\sum_{p=0}^\infty a_{ip}e_iz^p=
\sum_{i=1}^m\sum_{j=0}^{n-1}\sum_{k=0}^\infty a_{i,nk+j}e_iz^{nk+j}.
\end{eqnarray*}
Suppose $F=0$, then from
\begin{eqnarray*}
\langle\sum_{i=1}^m\sum_{j=0}^{n-1}\sum_{k=0}^\infty a_{i,nk+j}e_iz^{nk+j},e_qz^l\rangle=0\;\;\;\mbox{for each}\;q\in\{1,2,\cdots,m\}\;\mbox{and}\;l\in\mathbb{N},
\end{eqnarray*}
we conclude that $a_{i,nk+j}=0$ for all $i=1,2,\cdots,m, j=0,1,\cdots,n-1$ and $k\in\mathbb{N}$. That is, $0=\overbrace{0\oplus0\oplus\cdots\oplus0}^{mn}$, which yields that $H_{\mathbb{C}^m}^2(\mathbb{D})=\bigoplus_{ij}\mathcal{L}_{ij}$.

(iii) It is obvious that both $\mathcal{L}_{ij}$ and $\mathcal{L}_{ij}^\perp$ are invariant subspaces for $T_{z^n}$.
\end{proof}

By the previous lemma, it is clear that $T_{z^n}=\bigoplus_{ij}T_{z^n}|_{\mathcal{L}_{ij}}$. Then we can get the unitary equivalence of analytic Toeplitz operator.

\begin{Theorem}
$T_{z^n}$ acting on the vector-valued Hardy space $H_{\mathbb{C}^m}^2(\mathbb{D})$ is unitarily equivalent to $\bigoplus_1^{mn}T_z$, where $T_z$ is acting on the scalar-valued Hardy space $H_{\mathbb{C}}^2(\mathbb{D})$.
\end{Theorem}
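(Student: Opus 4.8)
The plan is to exploit the orthogonal decomposition furnished by Lemma~2 and then identify each piece with a single copy of the unilateral shift. By Lemma~2(ii)--(iii) we have $H_{\mathbb{C}^m}^2(\mathbb{D})=\bigoplus_{i,j}\mathcal{L}_{ij}$ with each $\mathcal{L}_{ij}$ reducing for $T_{z^n}$, so that $T_{z^n}=\bigoplus_{i,j}T_{z^n}|_{\mathcal{L}_{ij}}$, a direct sum of exactly $mn$ operators, one for each pair $i\in\{1,\dots,m\}$, $j\in\{0,\dots,n-1\}$. It therefore suffices to show that each summand $T_{z^n}|_{\mathcal{L}_{ij}}$ is unitarily equivalent to $T_z$ on $H_{\mathbb{C}}^2(\mathbb{D})$.

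Fix $(i,j)$. By Lemma~2(i), $\{e_iz^{nk+j}:k\in\mathbb{N}\}$ is an orthonormal basis of $\mathcal{L}_{ij}$, while $\{z^k:k\in\mathbb{N}\}$ is the standard orthonormal basis of $H_{\mathbb{C}}^2(\mathbb{D})$. I would define $U_{ij}\colon\mathcal{L}_{ij}\to H_{\mathbb{C}}^2(\mathbb{D})$ to be the unique bounded linear map with $U_{ij}(e_iz^{nk+j})=z^k$ for every $k\in\mathbb{N}$; since it carries one orthonormal basis onto another, $U_{ij}$ extends to a unitary operator. Then one computes on basis vectors:
$$U_{ij}\bigl(T_{z^n}|_{\mathcal{L}_{ij}}\bigr)(e_iz^{nk+j})=U_{ij}\bigl(e_iz^{n(k+1)+j}\bigr)=z^{k+1}=T_z(z^k)=T_zU_{ij}(e_iz^{nk+j}),$$
so $U_{ij}T_{z^n}|_{\mathcal{L}_{ij}}=T_zU_{ij}$ by linearity and continuity. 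Hence $T_{z^n}|_{\mathcal{L}_{ij}}$ is unitarily equivalent to $T_z$.

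Finally, I would set $U=\bigoplus_{i,j}U_{ij}$; this is a well-defined unitary from $H_{\mathbb{C}^m}^2(\mathbb{D})=\bigoplus_{i,j}\mathcal{L}_{ij}$ onto $\bigoplus_1^{mn}H_{\mathbb{C}}^2(\mathbb{D})$, and the intertwining relations above assemble into $UT_{z^n}=\bigl(\bigoplus_1^{mn}T_z\bigr)U$, which is the asserted unitary equivalence. The argument is essentially bookkeeping and I do not expect a genuine obstacle; the only points requiring care are that the index set $\{1,\dots,m\}\times\{0,\dots,n-1\}$ has exactly $mn$ elements (so the multiplicity is counted correctly) and that the decomposition $H_{\mathbb{C}^m}^2(\mathbb{D})=\bigoplus_{i,j}\mathcal{L}_{ij}$ is orthogonal, which is what legitimizes forming the direct-sum unitary $U$ from the pieces $U_{ij}$.
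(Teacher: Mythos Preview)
Your proof is correct and follows essentially the same approach as the paper: decompose $T_{z^n}$ along the reducing subspaces $\mathcal{L}_{ij}$ from Lemma~2 and intertwine each restriction with $T_z$ via the map sending the orthonormal basis $\{e_iz^{nk+j}\}_k$ to $\{z^k\}_k$. The only cosmetic difference is that the paper defines the intertwiner in the opposite direction (your $U_{ij}$ is the inverse of the paper's $X_{ij}$), and your write-up is in fact slightly more explicit about why the map is unitary and how the pieces assemble into the global unitary $U$.
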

\begin{proof}
Note that $T_zz^k=z^{k+1}.$ Set $T_{ij}=T_{z^n}|_{\mathcal{L}_{ij}}$, then
\begin{eqnarray*}
T_{ij}e_iz^{nk+j}=e_iz^{n(k+1)+j}.
\end{eqnarray*}
Define $X_{ij}:H_{\mathbb{C}}^2(\mathbb{D})\rightarrow\mathcal{L}_{ij}$ such that $X_{ij}z^k=e_iz^{nk+j}$.
Thus we conclude that $X_{ij}T_z=T_{ij}X_{ij}$. In fact, for each $k\in\mathbb{N}$,
\begin{eqnarray*}
X_{ij} T_z z^k=X_{ij}z^{k+1}
=e_iz^{n(k+1)+j}=T_{ij}e_iz^{nk+j}=T_{ij}X_{ij}z^k.
\end{eqnarray*}
Obviously, $X_{ij}$ is bounded, invertible, and $\|X_{ij}\|=1$. So $T_{ij}$ is unitarily equivalent to $T_z$. Thus, we are done.
\end{proof}

\section{The reducing subspace of analytic toeplitz operator $T_{z^n}$}

As is well known, a reducing subspace for a bounded operator $T$ on a Hilbert space $H$ can be characterized by a
certain commuting property:
If $M$ is a closed subspace of $H$ and $P_M$ is a projection onto $M$, it turns out that $M$ is a reducing subspace for $T$ if and only if $P_{M}T=TP_{M}$.

For convenience, 
we use ${\rm I}$ and ${\rm O}$ to represent the identity matrix and the zero matrix, respectively.
\begin{Lemma}
Let $H_{\mathbb{C}}^2(\mathbb{D})$ be the scalar-valued Hardy space. If the operator $P$ has the matrix representation
\begin{equation*}
P=\left[
\begin{array}{cccccc}
p_{11}&p_{12}&p_{13}&\cdots&p_{1k}&\cdots\\
p_{21}&p_{22}&p_{23}&\cdots&p_{2k}&\cdots\\
p_{31}&p_{32}&p_{33}&\cdots&p_{3k}&\cdots\\
\vdots&\vdots&\vdots&\vdots&\vdots&\cdots\\
p_{k1}&p_{k2}&p_{k3}&\cdots&p_{kk}&\cdots\\
\vdots&\vdots&\vdots&\vdots&\vdots&\ddots
\end{array}
\right]
\end{equation*}
with respect to the orthonormal basis $\{z_k\}_{k\in\mathbb{N}}$ of $H_{\mathbb{C}}^2(\mathbb{D})$, where $p_{ij}\in\mathbb{C}, \forall i,j\geq 1$. Then $P\in\mathcal{A}'(T_z)$ if and only if the entries of $P$ satisfying the following equalities:
\begin{eqnarray*}
\left\{
\begin{array}{ll}
p_{ij}=0,& i<j,\\
p_{ii}=p_{11}, &i=2,3,\cdots,\\
p_{j+k,j}=p_{j+k-1,j-1}, &j\geq2,k\geq1.
\end{array}
\right.
\end{eqnarray*}
\end{Lemma}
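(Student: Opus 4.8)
The plan is to convert the operator identity $PT_z=T_zP$ into a system of scalar equations for the matrix entries $p_{ij}$ by pairing it with the basis vectors, and then to recognize that system as a rewriting of the three displayed equalities.

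First I would record, writing $p_{ij}=\langle Pz_j,z_i\rangle$, that $T_z$ acts on $\{z_k\}$ as the forward shift, $T_zz_k=z_{k+1}$, so its adjoint is the backward shift with $T_z^*z_i=z_{i-1}$ for $i\geq 2$ and $T_z^*z_1=0$. Consequently
\[
\langle PT_zz_j,z_i\rangle=\langle Pz_{j+1},z_i\rangle=p_{i,j+1},
\]
while $\langle T_zPz_j,z_i\rangle=\langle Pz_j,T_z^*z_i\rangle$ equals $p_{i-1,j}$ when $i\geq2$ and equals $0$ when $i=1$. Therefore $P\in\mathcal{A}'(T_z)$ if and only if
\[
\text{(I)}\quad p_{1,j+1}=0\ \ (j\geq1),\qquad\qquad \text{(II)}\quad p_{i,j+1}=p_{i-1,j}\ \ (i\geq2,\ j\geq1),
\]
which together say precisely that $P$ is a lower-triangular matrix whose entries are constant along each diagonal parallel to the main diagonal.

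Next I would verify that (I)--(II) is equivalent to the three stated equalities. For the forward implication: rewriting (II) as $p_{ij}=p_{i-1,j-1}$ and iterating (reducing both indices by one at a time until one of them hits $1$) shows that $p_{ij}=p_{1,\,j-i+1}$ when $i<j$, which is $0$ by (I), and that $p_{ii}=p_{11}$; finally, (II) with $i$ replaced by $j+k$ and $j$ replaced by $j-1$ reads $p_{j+k,j}=p_{j+k-1,j-1}$, valid for $j\geq2$ and $k\geq1$. Conversely, from $p_{ij}=0$ for $i<j$ one gets (I) since $1<j+1$, and one recovers (II) by separating the cases $i\leq j$ (both sides vanish, by the first equality), $i=j+1$ (both sides equal $p_{11}$, by the second equality), and $i\geq j+2$ (an instance of the third equality after a shift of indices).

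The computation is entirely routine, and since $P$ is assumed to be a bounded operator, all the matrix products and inner products above are legitimate, so no analytic issue arises. The only point needing mild care is the index bookkeeping in the converse direction -- matching the index shift in the third equality with (II) and handling the boundary cases $i=1$, $i=j$, $i=j+1$ separately -- but this is not a genuine obstacle.
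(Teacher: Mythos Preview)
Your proof is correct and follows essentially the same route as the paper: both reduce $PT_z=T_zP$ to the entrywise relations $p_{1,j+1}=0$ and $p_{i,j+1}=p_{i-1,j}$ (the paper via explicit matrix multiplication, you via pairing with basis vectors and using $T_z^*$), and then identify these with the three displayed equalities. Your write-up is in fact a bit more careful than the paper's on the index bookkeeping in the equivalence step, but the underlying argument is the same.
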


\begin{proof}
Since $T_z z^k=z^{k+1}$, then
\begin{eqnarray*}
&&T_z(1,z,z^2,\cdots,z^k,\cdots)=(z,z^2,z^3\cdots,z^{k+1},\cdots)\\
&=&(1,z,z^2,\cdots,z^k,\cdots)
\left[
\begin{array}{ccccccc}
0&0&0&\cdots&0&0&\cdots\\
1&0&0&\cdots&0&0&\cdots\\
0&1&0&\cdots&0&0&\cdots\\
\vdots&\vdots&\vdots&\ddots&\vdots&\vdots&\cdots\\
0&0&0&\cdots&0&0&\cdots\\
0&0&0&\cdots&1&0&\cdots\\
\vdots&\vdots&\vdots&\vdots&\vdots&\vdots&\ddots
\end{array}
\right].
\end{eqnarray*}
If $P\in\mathcal{A}'(T_z)$, that is, $T_zP=PT_z$, then we can obtain that
\begin{eqnarray*}
\left[
\begin{array}{cccccc}
0&0&0&\cdots&0&\cdots\\
p_{11}&p_{12}&p_{13}&\cdots&p_{1k}&\cdots\\
p_{21}&p_{22}&p_{23}&\cdots&p_{2k}&\cdots\\
\vdots&\vdots&\vdots&\vdots&\vdots&\cdots\\
p_{k-1,1}&p_{k-1,2}&p_{k-1,3}&\cdots&p_{k-1,k}&\cdots\\
\vdots&\vdots&\vdots&\vdots&\vdots&\ddots
\end{array}
\right]
\end{eqnarray*}
\begin{eqnarray*}
=\left[
\begin{array}{cccccc}
p_{12}&p_{13}&p_{14}&\cdots&p_{1,k+1}&\cdots\\
p_{22}&p_{23}&p_{24}&\cdots&p_{2,k+1}&\cdots\\
p_{32}&p_{33}&p_{34}&\cdots&p_{3,k+1}&\cdots\\
\vdots&\vdots&\vdots&\vdots&\vdots&\cdots\\
p_{k2}&p_{k3}&p_{k4}&\cdots&p_{k,k+1}&\cdots\\
\vdots&\vdots&\vdots&\vdots&\vdots&\ddots
\end{array}
\right],
\end{eqnarray*}
which concludes that
\begin{eqnarray}
\left\{
\begin{array}{ll}
p_{ij}=0,& i<j,\\
p_{ii}=p_{11}, &i=2,3,\cdots,\\
p_{j+k,j}=p_{j+k-1,j-1}, &j\geq2,k\geq1.\label{1}
\end{array}
\right.
\end{eqnarray}
Conversely, if the entries of $P$ satisfy (\ref{1}), a.e., $P$ admits the following matrix representation
\begin{eqnarray*}
P=\left[
\begin{array}{cccccc}
p_{11}&0&0&\cdots&0&\cdots\\
p_{21}&p_{11}&0&\cdots&0&\cdots\\
p_{31}&p_{21}&p_{11}&\cdots&0&\cdots\\
\vdots&\vdots&\vdots&\vdots&\vdots&\cdots\\
p_{k1}&p_{k-1,1}&p_{k-2,1}&\cdots&p_{11}&\cdots\\
\vdots&\vdots&\vdots&\vdots&\vdots&\ddots
\end{array}
\right]
\end{eqnarray*}
with respect to the basis $\{z_k\}_{k\in\mathbb{N}}$. Simple computation shows that $T_zP=PT_z$, thus $P\in\mathcal{A}'(T_z)$.
\end{proof}

\begin{Lemma}
Let $H_{\mathbb{C}}^2(\mathbb{D})$ be the scalar-valued Hardy space. If the operator $G$ is a projection, then $G\in\mathcal{A}'(\bigoplus_1^{mn}T_z)$ if and only if $G$ admits the following matrix representation
\begin{scriptsize}\begin{equation*}
\left[
\begin{array}{ccccccccccccc}
G_{10}&&&&&&&&&&&&\\
&G_{20}&&&&&&&&&&&\\
&&\ddots&&&&&&&&&\\
&&&G_{m0}&&&&&&&&&\\
&&&&G_{11}&&&&&&&&\\
&&&&&G_{21}&&&&&&&\\
&&&&&&\ddots&&&&&&\\
&&&&&&&G_{m1}&&&&&\\
&&&&&&&&\ddots&&&&\\
&&&&&&&&&G_{1,n-1}&&&\\
&&&&&&&&&&G_{2,n-1}&&\\
&&&&&&&&&&&\ddots&\\
&&&&&&&&&&&&G_{m,n-1}\\
\end{array}
\right],
\end{equation*}\end{scriptsize} where $G_{ij}={\rm I}\;\mbox{or}\;{\rm O}$ for $i=1,2,\cdots,m,j=0,1,\cdots n-1$.
\end{Lemma}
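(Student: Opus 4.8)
The plan is to combine the block description of the commutant with the two defining relations of a projection, $G=G^{*}$ and $G^{2}=G$. Write $N=mn$ and view $\bigoplus_{1}^{N}T_{z}$ as acting on $\bigoplus_{1}^{N}H_{\mathbb{C}}^{2}(\mathbb{D})$, the $N$ summands indexed by the pairs $(i,j)$ in the order displayed. Every bounded $G$ on this space is an $N\times N$ operator matrix $[G_{st}]_{s,t=1}^{N}$ whose entries are operators on $H_{\mathbb{C}}^{2}(\mathbb{D})$, and $G(\bigoplus_{1}^{N}T_{z})=(\bigoplus_{1}^{N}T_{z})G$ holds exactly when each entry commutes with $T_{z}$, i.e.\ lies in $\mathcal{A}'(T_{z})$. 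By the preceding lemma each entry is then a lower-triangular analytic-Toeplitz matrix with respect to $\{z^{k}\}_{k\in\mathbb{N}}$.

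First I would impose $G=G^{*}$, which reads $G_{st}=(G_{ts})^{*}$. The adjoint of a lower-triangular analytic-Toeplitz operator is upper-triangular, so for $s=t$ this forces a self-adjoint analytic-Toeplitz operator, which can only be a real scalar multiple of $\mathrm{I}$, while for $s\neq t$ it forces $G_{st}$ to be simultaneously lower- and upper-triangular, hence again a scalar multiple of $\mathrm{I}$. Thus $G=C\otimes\mathrm{I}$ for a single scalar matrix $C=[c_{st}]\in M_{N}(\mathbb{C})$, and $G=G^{*}$ makes $C$ Hermitian. Feeding in $G^{2}=G$ reduces to $C^{2}=C$, so $C$ is a Hermitian idempotent, that is, an orthogonal projection matrix; under the indexing $s\leftrightarrow(i,j)$ the blocks $G_{ij}$ are exactly the diagonal entries $c_{(i,j),(i,j)}\,\mathrm{I}$.

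The heart of the matter, and the step I expect to be hardest, is to pass from ``$C$ is an orthogonal projection matrix'' to the displayed conclusion that $C$ is \emph{diagonal} with entries in $\{0,1\}$, equivalently that every block $G_{ij}$ is $\mathrm{I}$ or $\mathrm{O}$ and every cross block vanishes. The displayed form describes precisely the coordinate projections, i.e.\ the orthogonal projections of $H_{\mathbb{C}^{m}}^{2}(\mathbb{D})$ onto sums of the reducing subspaces $\mathcal{L}_{ij}$ of the earlier lemma; so the task is to show that the coordinate frame $\{\mathcal{L}_{ij}\}$ is forced, not merely available up to a change of basis. I would attack this by evaluating $G$ directly on the coordinate basis $\{e_{i}z^{nk+j}\}$ and tracking the constants $c_{(i,j),(p,q)}$ entrywise, using the decomposition $H_{\mathbb{C}^{m}}^{2}(\mathbb{D})=\bigoplus_{ij}\mathcal{L}_{ij}$ rather than a diagonalizing unitary, since the statement concerns the matrix of $G$ in this fixed frame. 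This is the delicate place: a general Hermitian idempotent $C$ need not be diagonal, so any argument closing the gap must exploit additional structure tied to the fixed basis in order to eliminate the off-diagonal constants.

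Finally, the converse is immediate: a block-diagonal $G$ with each $G_{ij}\in\{\mathrm{I},\mathrm{O}\}$ is patently self-adjoint and idempotent and commutes blockwise with $\bigoplus_{1}^{mn}T_{z}$, hence is a projection in $\mathcal{A}'(\bigoplus_{1}^{mn}T_{z})$.
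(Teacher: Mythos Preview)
Your reduction is sharper than the paper's. The paper only argues that a projection in $\mathcal{A}'(T_z)$ must be $\mathrm{I}$ or $\mathrm{O}$ and then, without further comment, declares that a projection in $\mathcal{A}'\bigl(\bigoplus_1^{mn}T_z\bigr)$ is block-diagonal with each diagonal block $\mathrm{I}$ or $\mathrm{O}$; the off-diagonal blocks are never mentioned. You, by contrast, correctly observe that commuting with $\bigoplus_1^{mn}T_z$ only forces each block $G_{st}$ to be analytic Toeplitz, and that self-adjointness then makes every block a scalar multiple of $\mathrm{I}$, so $G=C\otimes\mathrm{I}$ with $C\in M_{mn}(\mathbb{C})$ a Hermitian idempotent. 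That is exactly right, and it is also where the argument must end.

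The gap you flag is genuine and cannot be closed: the lemma as stated is false whenever $mn\ge 2$. Any orthogonal projection $C\in M_{mn}(\mathbb{C})$ yields a projection $G=C\otimes\mathrm{I}$ in $\mathcal{A}'\bigl(\bigoplus_1^{mn}T_z\bigr)$, and non-diagonal choices exist; for instance with $mn=2$ take
\[
C=\tfrac12\begin{pmatrix}1&1\\[2pt]1&1\end{pmatrix},\qquad G=C\otimes\mathrm{I},
\]
whose range $\{(f,f):f\in H_{\mathbb{C}}^2(\mathbb{D})\}$ is a reducing subspace not of the displayed coordinate form. There is no ``additional structure tied to the fixed basis'' to exploit: the fixed decomposition $\bigoplus_{ij}\mathcal{L}_{ij}$ singles out one orthonormal frame of $\mathbb{C}^{mn}$, but the commutant condition is invariant under all unitary changes of that frame. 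So your instinct that this step is the hard one is correct; the correct statement is that the projections in the commutant are exactly the $C\otimes\mathrm{I}$ with $C$ an arbitrary orthogonal projection in $M_{mn}(\mathbb{C})$, and the $\mathcal{L}_{ij}$ are merely one choice of $mn$ minimal reducing subspaces among continuum-many.
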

\begin{proof}
By Lemma 3, we know that $P\in\mathcal{A}'(T_z)$ if and only if
\begin{eqnarray*}
P=\left[
\begin{array}{cccccc}
p_{11}&0&0&\cdots&0&\cdots\\
p_{21}&p_{11}&0&\cdots&0&\cdots\\
p_{31}&p_{21}&p_{11}&\cdots&0&\cdots\\
\vdots&\vdots&\vdots&\vdots&\vdots&\cdots\\
p_{k1}&p_{k-1,1}&p_{k-2,1}&\cdots&p_{11}&\cdots\\
\vdots&\vdots&\vdots&\vdots&\vdots&\ddots
\end{array}
\right].
\end{eqnarray*}
Moreover, if $P$ is a projection, then $P=P^*=P^2$, thus $p_{i1}=0$  for all $i\geq 2$, and $p_{11}=1\;\mbox{or}\;0$. 
That is $P={\rm I}\;\mbox{or}\;{\rm O}$. Therefore, if $G$ is a projection, and $G\in\mathcal{A}'(\bigoplus_1^{mn}T_z)$ if and only if $G$ admits the following matrix representation
\begin{scriptsize}
\begin{equation*}
\left[
\begin{array}{ccccccccccccc}
G_{10}&&&&&&&&&&&&\\
&G_{20}&&&&&&&&&&&\\
&&\ddots&&&&&&&&&\\
&&&G_{m0}&&&&&&&&&\\
&&&&G_{11}&&&&&&&&\\
&&&&&G_{21}&&&&&&&\\
&&&&&&\ddots&&&&&&\\
&&&&&&&G_{m1}&&&&&\\
&&&&&&&&\ddots&&&&\\
&&&&&&&&&G_{1,n-1}&&&\\
&&&&&&&&&&G_{2,n-1}&&\\
&&&&&&&&&&&\ddots&\\
&&&&&&&&&&&&G_{m,n-1}\\
\end{array}
\right],
\end{equation*}
\end{scriptsize}where $G_{ij}={\rm I}\;\mbox{or}\;{\rm O}$ for all $i=1,2,\cdots,m,\;j=0,1,\cdots,n-1$.
\end{proof}

\begin{Theorem}
$T_{z^n}$ has $2^{mn}$ reducing subspaces with $mn$ minimal reducing subspaces $\mathcal{L}_{ij}$ for $i=1,2,\cdots,m$ and $j=0,1,\cdots,n-1$.
\end{Theorem}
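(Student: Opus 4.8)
The plan is to transport the problem to the orthogonal direct sum $\bigoplus_1^{mn}T_z$ and then read everything off Lemma 4. First I would record the standard fact that a closed subspace $M$ of a Hilbert space reduces a bounded operator $T$ if and only if the orthogonal projection $P_M$ lies in the commutant $\mathcal{A}'(T)$, so that $M\mapsto P_M$ is a lattice isomorphism between reducing subspaces and commuting projections; moreover any unitary equivalence carries this correspondence bijectively. Hence, by Theorem 1, counting and describing the reducing subspaces of $T_{z^n}$ on $H_{\mathbb{C}^m}^2(\mathbb{D})$ is the same as doing so for $\bigoplus_1^{mn}T_z$ on $\bigoplus_1^{mn}H_{\mathbb{C}}^2(\mathbb{D})$, the identification being implemented by the unitary $U$ assembled from the intertwiners $X_{ij}$ of Theorem 1, which satisfy $X_{ij}z^k=e_iz^{nk+j}$ and therefore map the $(i,j)$-th summand of $\bigoplus_1^{mn}H_{\mathbb{C}}^2(\mathbb{D})$ onto $\mathcal{L}_{ij}$.

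Next I would invoke Lemma 4: a projection $G$ belongs to $\mathcal{A}'(\bigoplus_1^{mn}T_z)$ exactly when it is the block-diagonal operator whose $(i,j)$-block $G_{ij}$ equals ${\rm I}$ or ${\rm O}$ for $i=1,\dots,m$ and $j=0,\dots,n-1$. Since each of the $mn$ blocks independently takes one of two values, there are precisely $2^{mn}$ such projections, hence $2^{mn}$ reducing subspaces of $\bigoplus_1^{mn}T_z$, and thus of $T_{z^n}$. Pulling a given $G$ back through $U$ identifies the associated reducing subspace as $U\big(\bigoplus_{G_{ij}={\rm I}}H_{\mathbb{C}}^2(\mathbb{D})\big)=\bigoplus_{G_{ij}={\rm I}}\mathcal{L}_{ij}$. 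So every reducing subspace of $T_{z^n}$ has the form $\bigoplus_{(i,j)\in S}\mathcal{L}_{ij}$ for some $S\subseteq\{1,\dots,m\}\times\{0,\dots,n-1\}$, and conversely each such sum is reducing by Lemma 2(iii); this both gives the count $2^{mn}$ and exhibits the full lattice.

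Finally I would pin down the minimal reducing subspaces. From the description above, the reducing subspaces contained in $\bigoplus_{(i,j)\in S}\mathcal{L}_{ij}$ are exactly the $\bigoplus_{(i,j)\in S'}\mathcal{L}_{ij}$ with $S'\subseteq S$, so minimality forces $|S|=1$; this leaves precisely the $mn$ candidates $\mathcal{L}_{ij}$. Each is genuinely minimal because $T_{z^n}|_{\mathcal{L}_{ij}}$ is unitarily equivalent to the scalar shift $T_z$ by Theorem 1, and $T_z$ is irreducible: the computation inside the proof of Lemma 4 (a projection commuting with $T_z$ has $p_{i1}=0$ for $i\geq2$ and $p_{11}\in\{0,1\}$, hence equals ${\rm I}$ or ${\rm O}$) shows $T_z$ admits no nontrivial reducing subspace. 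I do not expect a genuine obstacle here; the only point requiring care is the bookkeeping in Step 1, namely matching the abstract block decomposition of $\bigoplus_1^{mn}T_z$ used in Lemma 4 with the concrete decomposition $H_{\mathbb{C}^m}^2(\mathbb{D})=\bigoplus_{ij}\mathcal{L}_{ij}$ through the correct intertwiners $X_{ij}$, after which the value $2^{mn}$ and the list of the $mn$ minimal pieces $\mathcal{L}_{ij}$ follow at once.
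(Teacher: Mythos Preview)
Your proposal is correct and follows essentially the same route as the paper: transport the problem via the unitary built from the $X_{ij}$ of Theorem~1 to $\bigoplus_1^{mn}T_z$, apply Lemma~4 to enumerate the $2^{mn}$ commuting projections, and pull the block-diagonal projections back to obtain the reducing subspaces $\bigoplus_{(i,j)\in S}\mathcal{L}_{ij}$ with the $\mathcal{L}_{ij}$ minimal. Your write-up is in fact tidier than the paper's at the transport step, and your explicit justification of minimality via the irreducibility of $T_z$ (extracted from the proof of Lemma~4) fills in a point the paper leaves implicit.
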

\begin{proof}
From the proof of Theorem 1, we know $T_z=X_{ij}^*T_{ij}X_{ij}$. Set
\begin{scriptsize}
\begin{eqnarray*}
X=\left[
\begin{array}{ccccccccccccc}
X_{10}&&&&&&&&&&&&\\
&X_{20}&&&&&&&&&&&\\
&&\ddots&&&&&&&&&\\
&&&X_{m0}&&&&&&&&&\\
&&&&X_{11}&&&&&&&&\\
&&&&&X_{21}&&&&&&&\\
&&&&&&\ddots&&&&&&\\
&&&&&&&X_{m1}&&&&&\\
&&&&&&&&\ddots&&&&\\
&&&&&&&&&X_{1,n-1}&&&\\
&&&&&&&&&&X_{2,n-1}&&\\
&&&&&&&&&&&\ddots&\\
&&&&&&&&&&&&X_{m,n-1}\\
\end{array}
\right],
\end{eqnarray*}
\end{scriptsize}then $X$ is a unitary operator satisfying $\bigoplus_1^{mn}T_z=X^*T_{z^n}X$. Note that $G$ is a projection and $G\in\mathcal{A}'(\bigoplus_1^{mn}T_z)$, $G$ has the form described as Lemma 4. Thus we have
\begin{eqnarray*}
G\bigoplus_1^{mn}T_z=\bigoplus_1^{mn}T_z G &\Rightarrow&GX^*T_{z^n}X=X^*T_{z^n}XG\\
&\Rightarrow& XGX^*T_{z^n}=T_{z^n}XGX^*\\
&\Rightarrow& GXX^*T_{z^n}=T_{z^n}GXX^*\\
&\Rightarrow& G T_{z^n}=T_{z^n}G,
\end{eqnarray*}
where
\begin{scriptsize}
\begin{equation*}
G=\left[
\begin{array}{ccccccccccccc}
G_{10}&&&&&&&&&&&&\\
&G_{20}&&&&&&&&&&&\\
&&\ddots&&&&&&&&&\\
&&&G_{m0}&&&&&&&&&\\
&&&&G_{11}&&&&&&&&\\
&&&&&G_{21}&&&&&&&\\
&&&&&&\ddots&&&&&&\\
&&&&&&&G_{m1}&&&&&\\
&&&&&&&&\ddots&&&&\\
&&&&&&&&&G_{1,n-1}&&&\\
&&&&&&&&&&G_{2,n-1}&&\\
&&&&&&&&&&&\ddots&\\
&&&&&&&&&&&&G_{m,n-1}\\
\end{array}
\right]
\end{equation*}
\end{scriptsize}and $G_{ij}={\rm I}\;\mbox{or}\;{\rm O}$ for all $i=1,2,\cdots,m,\,j=0,1,\cdots,n-1$. By Lemma 2, we concludes that the reducing subspaces of $T_{z^n}$ are
$$\bigoplus_{j=0}^{n-1}\bigoplus_{i=1}^m c_{ij}\mathcal{L}_{ij},\;\;\;\mbox{where}\;
c_{ij}=1\;\mbox{or}\;0,$$
and the minimal reducing subspaces are $\mathcal{L}_{ij},i=1,2,\cdots,m,j=0,1,\cdots,n-1$.
\end{proof}

\end{document}